\documentclass[12pt]{amsart}
\usepackage{url}
\usepackage[all]{xy}
\usepackage{array, tabularx, longtable}
\usepackage{amsthm, amsmath, amscd, amssymb,amsfonts}
\usepackage[mathscr]{eucal}
\usepackage[all]{xypic}
\newtheorem{thm}{Theorem}[section]
\newtheorem{theorem}[thm]{Theorem}

\newtheorem{lemma}[thm]{Lemma}
\newtheorem{claim}[thm]{Claim}
\newtheorem{proposition}[thm]{Proposition}
\newtheorem{example}[thm]{Example}

\numberwithin{equation}{section}
\usepackage[top=3.0cm, bottom=3.0cm, left=3.0cm, right=3.0cm]{geometry}
\begin{document}
\baselineskip=16pt

\title[Parametrization simple irreducible plane curve singularities]{Parametrization simple irreducible plane curve singularities in arbitrary characteristic}      
\author{Nguyen Hong Duc}
\address{$^{\dag}$Basque Center for Applied Mathematics, \newline \indent Alameda de Mazarredo 14, 48009 Bilbao, Bizkaia, Spain.} 
\email{hnguyen@bcamath.org}

\address{$^{\dag}$TIMAS, Thang Long University, \newline \indent Nghiem Xuan Yem, Hanoi, Vietnam.} 
\email{nhduc82@gmail.com}
\thanks{The author's research is supported by Juan de la Cierva Incorporación IJCI-2016-29891, the ERCEA Consolidator Grant 615655 NMST and the National Foundation for Science and Technology Development (NAFOSTED), Grant number 101.04-2017.12, Vietnam.}
\date{\today}                  
\maketitle
\begin{abstract}
We study the classification of plane curve singularities in arbitrary characteristic. We first give a bound for the determinacy of a plane curve singularity with respect to pararametrization equivalence in terms of its conductor. Then we classify parametrization simple plane curve singularities which are irreducible by giving a concrete list of normal forms of equations and parametrizations. In characteristic zero, the classification of parametrization simple irreducible plane curve singularities was achieved by Bruce and Gaffney.
\end{abstract}
\section{Introduction}
We classify irreducible plane curve singularities $f\in K[[x,y]]$ which are simple with respect to parametrization equivalence, where $K$ is an algebraically closed field of arbitrary characteristic. That is, the irreducible plane singularities whose parametrizations have modality 0 up to the change of coordinates in the source and target spaces (or, left-right equivalence, see Section \ref{sec2.1}). The notion of modality was introduced by Arnol'd in the seventies into the singularity theory for real and complex singularities. He classified simple, unimodal and bimodal hypersurface singularities with respect to right equivalence, i.e. the hypersurface singularities of right modality 0,1,2 respectively \cite{Arn72},\cite{Arn73},\cite{Arn76}. The classifications of contact simple and unimodal complete intersection singularities were done by Giusti \cite{Giu77} and Wall \cite{Wal83}. Classification of contact simple space curve singularities was  obtained by Giusti \cite{Giu77} and Fr\"uhbis-Kr\"uger \cite{FK99}. In positive characteristic, the right simple, unimodal and bimodal hypersurface singularities were recently classified by Greuel and the author in \cite{GN16} and \cite{Ng17}. The classification of contact simple hypersurface singularities were achieved by Greuel-Kr\"oning \cite{GK90}, while classifications of contact unimodal and bimodal singularities are still unknown. 

Curve singularities can be also described by parametrisations. Two plane curve singularities are contact equivalent if and only if their parametrizations are left-right equivalent. The first results on classification of simple curve singularities with respect to parametrization equivalence were obtained by Bruce and Gaffney, for complex irreducible plane curve singularities in $\mathbb{C}\{x,y\}$ \cite{BrG82}. The classifications were extended to irreducible space curves by Gibson and Hobbs \cite{GH93}, irreducible curves of any embedding dimension by Arnold \cite{Arn99}, and reducible curves by Kolgushkin and Sadykov \cite{KS01}.

In this paper, we generalize the result of Bruce and Gaffney to the singularities in arbitrary characteristic (Theorem \ref{thm31}). We give lists of normal forms of equations and parametrizations of parametrization simple plane curve singularities which are irreducible (Tables \ref{table1},\ref{table2},\ref{table3} in Section \ref{sec3}). We first study in Section \ref{sec2} the problem of determinacy with respect to parametrization equivalence. The theory of determinacy was systematically studied by Mather in \cite{Mat68}, where he defined the equivalence relations $\mathscr R, \mathscr C, \mathscr K, \mathscr L$ and $\mathscr A$ and obtained necessary and sufficient conditions for finite determinacy with respect to them. He also gave estimates for the corresponding determinacy. Lower estimates were provided later by Gaffney, Bruce, du Plessis and Wall. The problem of determinacy in positive characteristic with respect to $\mathscr R, \mathscr K$ was treated by Boubakri, Greuel and Markwig in \cite{BGM12} and recently by Greuel and Pham \cite{GP16},\cite{GP17}. We show that reduced plane curve singularities are finitely determined with respect to parametrization equivalence. Moreover, we give a lower bound for parametrization determinacy of a plane curve singularity in terms of its conductor (Theorem \ref{thm21}).

\subsection*{Acknowledgement} 
A part of this article was done in my thesis under the supervision of Professor Gert-Martin Greuel at the Technische Universit\"at Kaiserslautern. I am grateful to him for many valuable suggestions. 
\section{Parametrization determinacy}\label{sec2}
\subsection{Parametrization equivalence}\label{sec2.1}
For a {\em plane curve singularity} $f$, i.e. an element in the maximal ideal $\mathfrak m$ in $K[[x, y]]$, there is a unique (up to multiplication with units) decomposition 
$f=f_1^{\rho_1}\cdot\ldots\cdot f_r^{\rho_r},$
with $f_i\in \mathfrak m$ irreducible in $K[[x,y]]$. We assume, in this note, that $f$ is {\em reduced}, i.e. $\rho_i=1$ for all $i=1,\ldots,r$. The integral closure of $R:=R_f:=K[[x,y]]/\langle f\rangle$ (in the total quotient ring Quot($R$)) is isomorphic to $ \bar{R}:=  \bigoplus_{i=1}^r K[[t]]$ (see \cite{Cam80}, \cite{GLS06}). A composition $K[[x, y]] \twoheadrightarrow R  \hookrightarrow \bar{R} =\bigoplus_{i=1}^r K[[t]]$ of the natural projection $K[[x,y]]\twoheadrightarrow R$ and a normalization $R  \hookrightarrow \bar{R}$, is called a {\em parametrization} of $f$. It is an element in the space $J:=\mathrm{Hom}_K(K[[x,y]],\bar{R})$ of morphisms of local $K$-algebras. Any element of $\psi\in J$ can be identified with the image of $\psi(x),\psi(y)$ in $\bar{R}$. Hence, it is often written as a tuple of $r$ pairs $(x_i(t),y_i(t))$.  . 

Two morphisms of $K$-algebras $\psi,\psi'\colon K[[x,y]]\rightarrow  \bar{R}=\bigoplus_{i=1}^r K[[t]]$ are called {\em left-right equivalent} (or, $\mathcal{A}$-equivalent), $\psi \sim_{\mathcal{A}} \psi'$, if there exist an automorphism $\phi$ of $ \bar{R}$ and an automorphism $\Phi\in Aut_K(K[[x,y]])$ such that $\Phi\circ\psi=\phi\circ\psi'$. By an automorphism of $\bar{R}$ we mean a tuple of automorphisms of $K[[t]]$. Two plane curves $f,g \in K[[x,y]]$ are called {\em parametrization equivalent}, denoted by $f\sim_{p} g$, if there exist a parametrization $\psi$ of $f$ and a parametrization $\psi'$ of $g$ such that $\psi \sim_{\mathcal{A}} \psi'$. It was known that, $f\sim_{p} g$ if and only if $f\sim_{c} g$ (\cite[Prop. 1.2.10]{Ng13}, see also \cite[Lemma 2.2]{BrG82} for $f$ irreducible). 
\subsection{Parametrization determinacy}
For each ${\bf k}=(k_1,\ldots,k_r)\in\Bbb Z_{\geq 0}^r$, the ${\bf k}$-jet of $\psi$ is defined to be the composition $j^{\bf k}\psi\colon K[[x, y]] \overset{\psi}{\to} \bigoplus_{i=1}^r K[[t]]\to \bigoplus_{i=1}^r K[[t]]/(t^{k_i+1}).$ We call $\psi$ {\em parametrization ${\bf k}$-determined} if it is parametrization equivalent to every $\psi'$ whose ${\bf k}$-jet coincides with that of $\psi$. We say that $f$ is {\em parametrization finitely determined} if one (and therefore all) of its parametrizations is parametrization ${\bf k}$-determined for some ${\bf k}=(k_1,\ldots,k_r)\in\Bbb Z_{\geq 0}^r$. A minimum ${\bf k}$ with this property is called a parametrization determinacy of $f$ (or $\psi$). We show, in the present note, that $f$ is ${\bf d}$-parametrization determined, where ${\bf d}$ is concretely given by the conductor of $f$. 

Let $\mathcal C:=(R:\bar{R}):=\{u\in R\ |\ u\bar{R}\subset R\}$ be the {\em conductor ideal} of $\bar{R}$ in $R$ (cf. \cite{ZS60}). Then $\mathcal C$ is an ideal of both $R$ and $\bar{R}$. So one has $\mathcal C=(t^{c_1})\times \cdots \times(t^{c_r})$ for some $c_1,\ldots,c_r\in \Bbb Z_{\geq 0}$. We call ${\bf c}:={\bf c}(f):=(c_1,\ldots,c_r)\in \Bbb Z_{\geq 0}^r$ the {\em conductor (exponent)} of $f$. 
The conductor ${\bf c}=(c_1,\ldots,c_r)$ of $f$ is related to the ones of its branches and other invariants by the following beautiful formulas
\begin{equation}\label{eq21}
c_i=c(f_i)+\sum_{j\neq i} i(f_i,f_j)
\end{equation}
and
\begin{equation}\label{eq22}
|{\bf c}|:=c_1+\ldots+c_r=2\delta,
\end{equation}
where $\delta$ is the delta invariant of $f$, defined as $\delta:=\dim_K \bar{R}/R$. \\Here for $g,h\in K[[x,y]]$, $i(g,h)$ denotes the {\em intersection multiplicity} of $g,h$ defined by $i(g,h):=\dim_K K[[x,y]]/(g,h).$
Note that, if $h$ is irreducible and $\psi$ is a parametrization of $h$, then $i(g,h)=\text{ord }\psi(g)$. Furthermore, the intersection multiplicity is additive, i.e. if $h=h_1\cdot\ldots\cdot h_r$, then $i(g,h)=i(g,h_1)+\ldots+i(g,h_r).$ 

\begin{theorem}\label{thm21}
Let $f\in\mathfrak{m}\subset K[[x,y]]$ be reduced, $r$ the number of the irreducible components, ${\bf c}\in \Bbb Z_{\geq 0}^r$ its conductor, and let
\begin{equation*}
\Bbb Z_{\geq 0}^r \ni {\bf d}:=
\begin{cases}
1 &\text{ if } \mathrm{mt}(f)=1\\
{\bf c}+1 &\text{ if } \mathrm{mt}(f)=2 \text{ and } r=1\\
{\bf c}&\text{ if } \mathrm{mt}(f)=2 \text{ and } r=2\\
{\bf c}-{\bf 1} &\text{ if } \mathrm{mt}(f)>2.
\end{cases}
\end{equation*}
Then $f$ is parametrization ${\bf d}$-determined. In particular, $f$ is always parametrization $({\bf c}+{\bf 1})$-determined.
\end{theorem}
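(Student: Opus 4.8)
The plan is to prove the statement branch by branch according to the multiplicity $\mathrm{mt}(f)$, reducing everything to a single quantitative lemma about when a perturbation of a parametrization can be absorbed by automorphisms of source and target. Fix a parametrization $\psi$ of $f$, written as $r$ pairs $(x_i(t),y_i(t))\in K[[t]]^2$, and let $\psi'$ have the same ${\bf d}$-jet. Writing $\psi' = \psi + \eta$ with $\mathrm{ord}\,\eta_i > d_i$ coordinatewise, I would like to produce automorphisms $\Phi\in\mathrm{Aut}_K(K[[x,y]])$ and $\phi$ of $\bar R$ with $\Phi\circ\psi' = \phi\circ\psi$. The standard device is the homotopy (Mather–Tjurina) method: connect $\psi$ to $\psi'$ by $\psi_s := \psi + s\eta$ for $s\in[0,1]$ (formally, work over $K[[s]]$), and try to trivialize the family by integrating vector fields. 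This requires solving, at each $s$, an infinitesimal equation of the form
\[
\dot\psi_s \;=\; \xi\circ\psi_s \;+\; d\psi_s(\zeta),
\]
where $\xi$ is a (formal) vector field on $(K^2,0)$ — i.e. an element of $\mathfrak m\,\partial_x\oplus\mathfrak m\,\partial_y$ — and $\zeta=\bigoplus\zeta_i$ with $\zeta_i\in (t)\,\partial_t$. Concretely, componentwise one must solve
\[
\dot x_i \;=\; a(x_i,y_i) + x_i'(t)\,\zeta_i, \qquad \dot y_i \;=\; b(x_i,y_i) + y_i'(t)\,\zeta_i,
\]
with $(\dot x_i,\dot y_i) = \eta_i$, simultaneously in $i$, where $\xi = a\,\partial_x + b\,\partial_y$ is \emph{global} (independent of $i$). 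So the crux is a surjectivity statement: the map
\[
\Theta\colon \big(\mathfrak m\,\partial_x\oplus\mathfrak m\,\partial_y\big)\;\oplus\;\bigoplus_{i=1}^r (t)\,\partial_t \;\longrightarrow\; \bar R\oplus\bar R, \qquad (\xi,\zeta)\mapsto \big(\xi\circ\psi + d\psi(\zeta)\big),
\]
has image containing everything of order $> {\bf d}$ in each branch; equivalently $\mathrm{coker}\,\Theta$ is "concentrated in degrees $\le {\bf d}$."

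The heart of the matter is therefore a computation of this cokernel in terms of the conductor. The key point is that the submodule $d\psi(\zeta)$ already controls the "diagonal" direction $(x_i'(t),y_i'(t))$ in each branch up to one unit of order, and the conductor ${\bf c}$ is exactly the measure of how far $R$ sits inside $\bar R$ — so $t^{c_i}K[[t]]\subset$ (image of $R$) in the $i$-th slot. Thus I expect: for $\mathrm{mt}(f)>2$, the pair $(x_i,y_i)$ spans a $2$-dimensional space of leading terms, combined with the reparametrization direction this forces surjectivity in branch $i$ above degree $c_i-1$; for $\mathrm{mt}(f)=2$ with $r=1$ one is in the classical $(t^2,t^{c}+\cdots)$ situation and loses one extra degree (hence ${\bf c}+1$); for $\mathrm{mt}(f)=2,\ r=2$ the two smooth branches are transverse, giving ${\bf c}$; and $\mathrm{mt}(f)=1$ is the trivial smooth case. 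Once the cokernel bound is in hand, the passage from the infinitesimal statement to the finite one is the usual argument: solve the homotopy equation order by order in $t$ and in $s$ (this is a completeness/convergence argument in the $(t,s)$-adic topology, legitimate since we work in formal power series), integrate to get $\Phi_s,\phi_s$, and evaluate at $s=1$. I would phrase this as: if $\Theta$ surjects onto $\bigoplus_i t^{d_i+1}K[[t]]^2$, then $\psi$ is parametrization ${\bf d}$-determined — and then separately verify the surjectivity in each of the four cases.

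The main obstacle I anticipate is \emph{not} the homotopy machinery (which is formal and robust in any characteristic, since we never divide by integers — the "integration" is the substitution $t\mapsto$ flow of a formal vector field, which exists over any ring, unlike the exponential) but rather the precise cokernel computation, especially at the boundary degree where the bound is sharp. One has to be careful that $\xi$ is shared across all branches, so surjectivity cannot be checked branch-by-branch naively when $r\ge 2$; the terms $i(f_i,f_j)$ appearing in formula (\ref{eq21}) are precisely what accounts for the coupling, and I expect the argument to track, for a perturbation supported in branch $i$ above degree $c_i-1$, how to choose $\xi$ vanishing to high order along the other branches (using that $f_j$ vanishes on branch $j$ but $\mathrm{ord}\,f_j(x_i(t),y_i(t)) = i(f_i,f_j)$ on branch $i$) so as not to disturb them. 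A secondary subtlety is the $\mathrm{mt}(f)=2$, $r=1$ case in small characteristic: when $\mathrm{char}\,K = 2$ the branch has the form $(t^2, \sum a_j t^j)$ and the usual normalization steps that work in characteristic $0$ may fail, so the determinacy bound genuinely needs the extra "$+1$"; I would check this case by hand. The final clause — that $f$ is always $({\bf c}+{\bf 1})$-determined — then follows because ${\bf c}+{\bf 1}$ dominates ${\bf d}$ in every case (using ${\bf c}\ge{\bf 1}$ coordinatewise when $\mathrm{mt}(f)\ge 2$, and trivially when $\mathrm{mt}(f)=1$).
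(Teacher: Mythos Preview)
Your homotopy/Mather approach has a genuine gap in positive characteristic: the step ``integrate the vector fields to get $\Phi_s,\phi_s$ and evaluate at $s=1$'' does not go through. The flow of a formal vector field \emph{is} the exponential of the corresponding derivation, and that requires dividing by factorials; your parenthetical claim that the flow ``exists over any ring, unlike the exponential'' is false --- try to flow $t\,\partial_t$ in any positive characteristic, or $t^3\partial_t$ in characteristic~$2$. Working over $K[[s]]$ does not rescue this, since at the end you must set $s=1$, which is not in the maximal ideal. This is precisely the obstacle that forced the papers cited here (Boubakri--Greuel--Markwig \cite{BGM12}, Greuel--Pham \cite{GP16,GP17}) to abandon Mather's infinitesimal method for $\mathscr R,\mathscr K$-determinacy in positive characteristic and replace it by direct order-by-order constructions. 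You could perhaps salvage your scheme along those lines, but it is substantially more work than you indicate, and surjectivity of your map $\Theta$ is no longer the correct hypothesis.

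The paper's proof is entirely different and far more elementary, bypassing tangent spaces and integration altogether. Since ${\bf d}+{\bf 1}\ge{\bf c}$ componentwise, the differences $\psi(x)-\psi'(x)$ and $\psi(y)-\psi'(y)$ lie in $t^{{\bf d}+{\bf 1}}\bar R\subset\mathcal C\subset R=\mathrm{im}\,\psi$. Hence there exist $g_1,g_2\in K[[x,y]]$ with $\psi(g_1)=\psi(x)-\psi'(x)$ and $\psi(g_2)=\psi(y)-\psi'(y)$, and the single \emph{target} substitution $\Phi\colon(x,y)\mapsto(x-g_1,\,y-g_2)$ already gives $\psi\circ\Phi=\psi'$ on the nose --- no source reparametrization $\phi$ is used at all. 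The only thing left is to check that $\Phi$ is an automorphism, i.e.\ that $g_1,g_2\in\mathfrak m^2$. This is where the four-way case split enters: if $\mathrm{mt}(g_1)=1$ then $i(f_i,g_1)=\mathrm{ord}\,\psi_i(g_1)\ge d_i+1$ for every branch~$i$, and comparing this with the maximal contact multiplicity $\bar\beta_1$ via Lemmas~\ref{lm21} and~\ref{lm22} yields a contradiction in each case. The whole argument is a short, characteristic-free computation with intersection numbers.
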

The multiplicity of $f$, $\mathrm{mt}(f)$, is defined to be the maximal of integers $k$ for which $\langle f\rangle\subset \mathfrak{m}^k$. For the proof of the theorem we need the two following lemmas, which give several relations between the conductor (${\bf c}$) and the maximal contact multiplicity ($\bar{\beta_1}$) of a reduced power series $f$ in some concrete cases. Recall that the {\em maximal contact multiplicity} of $f$ is defined by
$$\bar{\beta_1}(f):=\sup \{\min_{i=1,\ldots,r} i(f_i,\gamma)|\gamma \text{ regular}\},$$
where $f_1,\ldots,f_r$ are the irreducible components of $f$. We omit proofs of the lemmas here and refer to \cite{Ng13}, Lemma 2.5.4 and 2.5.5, since they are elementary.  
\begin{lemma}\label{lm21}
Let $f=f_1\cdot f_2\in K[[x,y]]$ be reduced such that $f_1,f_2$ are regular. Then 
$$\bar{\beta_1}(f)= i(f_1,f_2).$$
\end{lemma}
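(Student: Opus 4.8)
The plan is to bring $f_1$ and $f_2$ into a normal form adapted to their tangent lines by a formal coordinate change---which alters neither $\bar\beta_1$ nor any intersection multiplicity---and then evaluate the supremum defining $\bar\beta_1(f)$ directly. Two standard facts will be used throughout. Since each regular $f_i$ is irreducible, it admits a parametrization $\psi_i$ and $i(f_i,\gamma)=\mathrm{ord}\,\psi_i(\gamma)$ for every $\gamma\in\mathfrak m$; and by Weierstrass preparation (valid in any characteristic) a regular power series with tangent line $\{y=0\}$ is a unit times $y-\eta(x)$ with $\eta\in x^2K[[x]]$, a branch parametrized by $x=t$, $y=\eta(t)$. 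In particular, two regular branches $y-\eta_1(x)$, $y-\eta_2(x)$ sharing the tangent $\{y=0\}$ satisfy $i(y-\eta_1,\,y-\eta_2)=\mathrm{ord}_t\big(\eta_1(t)-\eta_2(t)\big)$.

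For the inequality $\bar\beta_1(f)\ge i(f_1,f_2)$, note that $f$ reduced forces $f_1\ne f_2$, so $i(f_1,f_2)<\infty$; taking $\gamma=f_1$ gives $\min\{i(f_1,f_1),i(f_2,f_1)\}=i(f_1,f_2)$ (if one prefers to avoid the value $i(f_1,f_1)=\infty$, take $\gamma=f_1+x^N$ with $N>i(f_1,f_2)$ instead). Hence the supremum is attained and is at least $i(f_1,f_2)$.

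For the reverse inequality, fix an arbitrary regular $\gamma$. If the tangent line of $\gamma$ differs from that of $f_1$, then inspecting the linear part of $\psi_1(\gamma)$ shows $i(f_1,\gamma)=1$, and likewise with $f_2$ in place of $f_1$. Thus, when $f_1$ and $f_2$ have distinct tangent lines one gets $i(f_1,f_2)=1$ too, so $\min_i i(f_i,\gamma)\le 1=i(f_1,f_2)$ for every $\gamma$. When $f_1$ and $f_2$ share a tangent line, choose coordinates so that it is $\{y=0\}$ and write $f_i=y-\eta_i(x)$ with $\eta_i\in x^2K[[x]]$; for $\gamma$ with a different tangent the bound $\le 1\le i(f_1,f_2)$ holds as before, and for $\gamma$ with tangent $\{y=0\}$, say $\gamma= y-\gamma_0(x)$ with $\gamma_0\in x^2K[[x]]$, the ultrametric inequality $\mathrm{ord}(\eta_1-\eta_2)\ge \min\{\mathrm{ord}(\eta_1-\gamma_0),\mathrm{ord}(\gamma_0-\eta_2)\}$ gives $\min_i i(f_i,\gamma)=\min\{\mathrm{ord}(\eta_1-\gamma_0),\mathrm{ord}(\eta_2-\gamma_0)\}\le i(f_1,f_2)$. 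Combining the two bounds yields $\bar\beta_1(f)=i(f_1,f_2)$.

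The main (and only) work is the bookkeeping in the case split---tracking tangent lines, and checking $i(f_i,\gamma)=1$ whenever two smooth branches meet transversally---together with the remark that coordinate changes and the passage to parametrizations preserve all quantities in sight. There is no essential obstacle here, which is consistent with the paper's description of the lemma as elementary.
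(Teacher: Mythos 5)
Your proof is correct and complete: the lower bound via $\gamma=f_1$ (or $f_1+x^N$), the transversality observation giving $i(f_i,\gamma)=1$ when tangents differ, and, in the common-tangent case, Weierstrass preparation writing the branches as $y-\eta_i(x)$ together with the ultrametric inequality $\mathrm{ord}(\eta_1-\eta_2)\geq\min\{\mathrm{ord}(\eta_1-\gamma_0),\mathrm{ord}(\eta_2-\gamma_0)\}$ settle both inequalities, and every step is characteristic-free. The paper itself omits the proof, referring to \cite{Ng13}, Lemma 2.5.4, so there is no argument in the text to diverge from; your argument is precisely the standard elementary one the paper alludes to.
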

\begin{lemma}\label{lm22}
Let $f\in K[[x,y]]$ be irreducible. 
\begin{itemize}
\item[(i)] If $\mathrm{mt}(f)=2$, then $c(f)=\bar{\beta_1}(f)-1$.
\item[(ii)] If $\mathrm{mt}(f)>2$, then $c(f)>\bar{\beta_1}(f)$.
\end{itemize}
\end{lemma}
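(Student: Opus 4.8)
The plan is to pass to the value semigroup of the branch and reduce both statements to elementary numerical computations. Write $\bar R=K[[t]]$, let $R=K[[x(t),y(t)]]\subset\bar R$ be the image of a parametrization $\psi=(x(t),y(t))$ of $f$, and for $g\in K[[x,y]]$ set $v(g):=\mathrm{ord}_t\,\psi(g)$. Let $\Gamma:=\{v(g)\mid g\in R,\ g\neq 0\}\subset\Bbb Z_{\geq0}$ be the value semigroup. I would first record three dictionary facts. (a) $m:=\mathrm{mt}(f)=\min(\Gamma\setminus\{0\})$, since a generic linear form realizes the multiplicity and every element of $\mathfrak m$ has order at least $m$. (b) By definition of the conductor ideal, $c(f)$ is exactly the conductor of $\Gamma$, i.e. the least integer $c$ with $n\in\Gamma$ for all $n\geq c$. (c) $\bar\beta_1(f)=\min\{n\in\Gamma\mid m\nmid n\}$, the smallest value not divisible by the multiplicity. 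The content of (c) is the maximal-contact computation: after choosing coordinates so that the tangent line of $f$ is $\{y=0\}$, every smooth $\gamma$ is, up to a unit and up to transverse curves (which meet $f$ with multiplicity $m$), of the form $\gamma=y-\phi(x)$, whence $v(\gamma)=\mathrm{ord}_t\bigl(y(t)-\phi(x(t))\bigr)$; since $K[[x(t)]]$ has value set $\langle m\rangle=m\Bbb Z_{\geq0}$, subtracting a suitable $\phi(x)$ cancels exactly the terms of $y(t)$ of order divisible by $m$, and the maximum attainable order is the least value of $\Gamma$ not divisible by $m$.

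For (i), suppose $m=2$. Then $2\in\Gamma$, so $\Gamma$ contains every even number, and $\bar\beta_1:=\bar\beta_1(f)$ is the smallest odd element of $\Gamma$, say $\bar\beta_1=2k+1$. Adding even numbers shows that every odd number $\geq 2k+1$ lies in $\Gamma$, while by minimality no odd number below $2k+1$ does. Hence the gaps of $\Gamma$ are exactly $1,3,\ldots,2k-1$, the largest gap is $2k-1$, and therefore $c(f)=2k=\bar\beta_1-1$, as claimed.

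For (ii), suppose $m>2$ and write $\bar\beta_1=qm+s$ with $0<s<m$ and $q\geq1$ (note $\bar\beta_1>m$, since $\bar\beta_1$ is a value not divisible by $m$ while the smallest nonzero value is $m$). It suffices to produce a gap of $\Gamma$ strictly larger than $\bar\beta_1$. Below $\bar\beta_1$ the only values are $0,m,2m,\ldots,qm$, so within $\langle m,\bar\beta_1\rangle$ the open interval $(\bar\beta_1,\bar\beta_1+m)$ contains the single value $(q+1)m$ together with $m-2\geq1$ integers that do not belong to $\langle m,\bar\beta_1\rangle$. The remaining point is that these integers are genuine gaps of $\Gamma$, i.e. that no further minimal generator of $\Gamma$ lies in $(\bar\beta_1,\bar\beta_1+m)$. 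This is where I would invoke the structure of the semigroup of a plane branch: its minimal generators $\bar\beta_0=m<\bar\beta_1<\bar\beta_2<\cdots$ satisfy $\bar\beta_2>n_1\bar\beta_1\geq2\bar\beta_1$, where $n_1=e_0/e_1$ and $e_i=\gcd(\bar\beta_0,\ldots,\bar\beta_i)$; as $\bar\beta_1>m$ this gives $\bar\beta_2>\bar\beta_1+m$, so $\Gamma$ agrees with $\langle m,\bar\beta_1\rangle$ throughout $(\bar\beta_1,\bar\beta_1+m)$. Consequently that interval contains a gap, which is $>\bar\beta_1$, whence $c(f)>\bar\beta_1$.

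The main obstacle lives in the two non-formal inputs, and both are characteristic-sensitive. First, the maximal-contact identity (c) must be derived without Puiseux expansions — purely from the fact that $K[[x(t)]]$ has value set $m\Bbb Z_{\geq0}$ — so that it remains valid when $\mathrm{char}\,K\mid m$, where the normalization $x(t)=t^m$ is unavailable. Second, the generator inequality $\bar\beta_2>n_1\bar\beta_1$, the only structural fact used in (ii), must be known to hold for plane branches in arbitrary characteristic. Both belong to the standard semigroup theory of plane branches; granting them, (i) and (ii) are the short computations above. As a sanity check, for $\Gamma=\langle m,\bar\beta_1\rangle$ with $\gcd(m,\bar\beta_1)=1$ one has $c=(m-1)(\bar\beta_1-1)$, which equals $\bar\beta_1-1$ precisely when $m=2$ and strictly exceeds $\bar\beta_1$ once $m>2$, matching both cases of the lemma.
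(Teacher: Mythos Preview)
The paper does not actually prove this lemma in the text: it explicitly says ``We omit proofs of the lemmas here and refer to \cite{Ng13}, Lemma 2.5.4 and 2.5.5, since they are elementary.'' So there is no in-paper argument to compare against. Your proposal supplies exactly the kind of elementary semigroup computation the paper alludes to, and it is correct.

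A few remarks on the points you flagged. Your derivation of (c) is in fact characteristic-free as written: you only use that $v(x)=m$, so that every element of $K[[x(t)]]$ has value in $m\Bbb Z_{\ge 0}$, and then the non-archimedean trick (if $v(y-\phi_0(x))=\beta\notin m\Bbb Z$ and $v(y-\phi(x))>\beta$ then $v(\phi(x)-\phi_0(x))=\beta$, impossible) shows the supremum is attained at $\beta=\min\{n\in\Gamma: m\nmid n\}$. No normalization $x(t)=t^m$ is needed. The second input, $\bar\beta_2>n_1\bar\beta_1$, is part of the standard structure theorem for the semigroup of a plane branch, valid in arbitrary characteristic; this is precisely the content of Campillo's monograph \cite{Cam80}, which the paper already cites. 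So both ``non-formal inputs'' are available in the paper's setting, and with them your argument for (ii) is clean: since $m\ge 3$ the open interval $(\bar\beta_1,\bar\beta_1+m)$ contains $m-2\ge 1$ integers outside $\langle m,\bar\beta_1\rangle$, and $\bar\beta_2>n_1\bar\beta_1\ge 2\bar\beta_1>\bar\beta_1+m$ forces $\Gamma$ to agree with $\langle m,\bar\beta_1\rangle$ on that interval, producing a gap above $\bar\beta_1$.
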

\begin{proof}[Proof of Theorem \ref{thm21}]
Note that ${\bf d}+{\bf 1}\geq {\bf c}$, i.e. $d_i+1\geq c_i$ for all $i=1,\ldots, r$. Let $\psi=(\psi_1,\ldots,\psi_r)\colon K[[x,y]]\to \bar{R}$ be a parametrization of $f$ and let $\psi'\colon K[[x,y]]\to \bar{R}$ such that $j^{\bf d}(\psi)=j^{\bf d}(\psi')$. It suffices to show that $\psi \sim_{\mathcal A} \psi'$. 

Indeed, we have
$$\psi(x)-\psi'(x)\in t^{{\bf d}+{\bf 1}}\bar{R}\subset R\text{ and }\psi(y)-\psi'(y)\in t^{{\bf d}+{\bf 1}}\bar{R}\subset R.$$
Thus there exist $g_1,g_2\in K[[x,y]]$ such that 
$$\psi(g_1)=\psi(x)-\psi'(x)\in t^{{\bf d}+{\bf 1}}\bar{R}\text{ and }\psi(g_2)=\psi(y)-\psi'(y)\in t^{{\bf d}+{\bf 1}}\bar{R}.$$
The following claim shows that, the map $\Phi\colon K[[x, y]] \longrightarrow  K[[x,y]]$ sending $x,y$ to $ x-g_1(x,y), y-g_2(x,y)$ respectively, is an automorphism of $K[[x, y]]$ and hence $\psi \sim_{\mathcal A} \psi'$ as required, since $\psi\circ \Phi=\psi'$.
\begin{claim}
$\mathrm{mt}(g_1)>1$ (similarly, $\mathrm{mt}(g_2)>1$).
\end{claim}
{\em Proof of the claim}: Since the case $\mathrm{mt}(f)=1$ is evident, we assume that $\mathrm{mt}(f)\geq 2$. We argue by contradiction. Suppose that it is not true, i.e. $\mathrm{mt}(g_1)=1$. Then by the definition of the maximal contact multiplicity $\bar{\beta_1}(f)$,
\begin{equation}\label{eq22}
\min \{i(f_i,g_1)|i=1,\ldots,r\}\leq \bar{\beta_1}(f).
\end{equation}  
The following three steps comprise the proof:\\
{\bf Step 1:} $\mathrm{mt}(f)=2$ and $r=1$. Then $d=c+1$ and $\psi(g_1)\in t^{d+1} K[[t]]$. This implies
$$i(f,g_1)=\text{ord }\psi(g_1)\geq d+1=c+2=\bar{\beta_1}(f)+1,$$
where the last equality is due to Lemma \ref{lm22}. This contradicts to (\ref{eq22}).\\
{\bf Step 2:} $\mathrm{mt}(f)=2$ and $r=2$. Then $f=f_1\cdot f_2$ with $\mathrm{mt}(f_1)=\mathrm{mt}(f_2)=1$ and ${\bf d}={\bf c}$. It follows from (\ref{eq21}) that $c_1=c_2=i(f_1,f_2)$. Since $\psi_1(g_1)\in t^{d_1+1}K[[t]]$, 
$$i(g_1,f_1)=\text{ord }\psi_1(g_1)\geq d_1+1=i(f_1,f_2)+1.$$
Similarly, $i(g_1,f_2)\geq i(f_1,f_2)+1.$ Combining Lemma \ref{lm21} and (\ref{eq22}) we get
$$i(f_1,f_2)+1\leq\min\{i(f_1,g_1);i(f_2,g_1)\}\leq \bar{\beta_1}(f)=i(f_1,f_2),$$
a contradiction.\\
{\bf Step 3:} $\mathrm{mt}(f)>2$. Then ${\bf d}={\bf c}-{\bf 1}$. Let $f=f_1\cdot \ldots\cdot f_r$ be an irreducible decomposition of $f$ such that $\mathrm{mt}(f_1)\leq \ldots \leq \mathrm{mt}(f_r)$. We consider the three following cases:

$\bullet$ If $\mathrm{mt}(f_r)>2$, then $i(f_r,g_1)=\text{ord }\psi_r(g_1)\geq d_r+1=c_r$. By Lemma \ref{lm22} and by the definition of the maximal contact multiplicity of $f_r$, one deduce that
$$c(f_r)>\bar{\beta_1}(f_r) \geq i(f_r,g_1)\geq c_r>c(f_r),$$
a contradiction.

$\bullet$ If $\mathrm{mt}(f_r)=2$, then $r>1$ and $i(f_r,g_1)=\text{ord }\psi_r(g_1)\geq d_r+1=c_r$. This implies that $\bar{\beta_1}(f_r) \geq c_r$. By (\ref{eq21}) and the inequality $i(f_1,f_r)\geq \mathrm{mt}(f_r)=2$, 
$$c_r\geq c(f_r)+i(f_1,f_r)>c(f_r)+1.$$
It follows from Lemma \ref{lm22} that $c(f_r)=\bar{\beta_1}(f_r)-1 \geq c_r-1>c(f_r),$ which is a contradiction.

$\bullet$ If $\mathrm{mt}(f_r)=1$ then $\mathrm{mt}(f_1)=\mathrm{mt}(f_2)=\ldots=\mathrm{mt}(f_r)=1$ and $r=\mathrm{mt}(f)>2$. Due to (\ref{eq21}) one has $c_1\geq i(f_1,f_2)+i(f_1,f_r) \geq i(f_1,f_2)+1$.  Hence
$$i(f_1,g_1)=\text{ord }\psi_1(g_1)\geq d_1+1=c_1\geq i(f_1,f_2)+1.$$
Similarly $i(f_2,g_1)\geq i(f_1,f_2)+1$ and then $i(f_1,f_2)+1\leq \min\{i(f_1,g_1);i(f_2,g_1)\}$. It hence follows from Lemma \ref{lm21} that
$$i(f_1,f_2)+1\leq \min\{i(f_1,g_1);i(f_2,g_1)\}\leq \bar{\beta_1}(f_1\cdot f_2)=i(f_1, f_2),$$
a contradiction. This completes the theorem.
\end{proof}
\begin{example}\label{ex21}{\rm
1. Let $f=x^2-y^5$. Then $r(f)=1$ and ${\bf c}(f)=4$. It is easy to see that $f$ is not parametrization $4$-determined.

2. Let $f=(x-y^3)(x-y^5)$. Then $r(f)=2$, ${\bf c}(f)=(3,3)$ and 
$$\psi\colon K[[x,y]]\to K[[t]]\oplus K[[t]], g\mapsto g(t^3,t)\oplus g(t^5,t)$$
is a parametrization of $f$. It can be easily verified that $f$ is parametrization $(3,2)$- but not $(2,2)$- determined.
}\end{example}
\section{Parametrization simple singularities}\label{sec3}
\subsection{Parametrization modality}
Consider an action of algebraic group $G$ on a variety $X$ (over a given algebraically closed field $K$) and a {\em Rosenlicht stratification} $\{(X_i,p_i), i=1,\ldots, s\}$ of $X$ w.r.t. $G$. That is, a stratification $X=\cup_{i=1}^sX_i$, where the stratum $X_i$ is a locally closed $G$-invariant subvariety of $X$ such that the projection $p_i:X_i\to X_i/G$ is a geometric quotient. For each open subset $U\subset X$ the modality of $U$, $G\text{-}\mathrm{mod}(U)$, is the maximal dimension of the images of $U \cap X_i$ in $X_i/G$. The modality $G\text{-}\mathrm{mod}(x)$ of a point $x \in X$ is the minimum of $G\text{-}\mathrm{mod}(U)$ over all open neighbourhoods $U$ of $x$.

Let $\mathcal L:=Aut(K[[x,y]])$ resp. $\mathcal R:=Aut(\bar{R})$ the left group resp. the right group. The left-right group $\mathcal A:=\mathcal R\times \mathcal L$ acts on $J=\mathrm{Hom}_K(K[[x,y]],\bar{R})$ by $\left((\phi,\Phi),\psi\right) \mapsto \Phi^{-1}\circ \psi\circ \phi$. Then, two elements $\psi,\psi'\in J$ are left-right equivalent, if and only if they belong to the same $\mathcal{A}$-orbit.

For each ${k}\in \mathbb{Z}$, denoted by $J_{k}$ the ${k}$-jet space of $J$, that is, the space of morphisms $$K[[x,y]]\to \bar{R}_{k}:=\bigoplus_{i=1}^r K[[t]]/(t^{k+1}).$$ We may identify an element $\psi$ in $J_{k}$ with the pair $(\psi(x),\psi(y))$ in $K[[t]]/(t^{k+1})\times K[[t]]/(t^{k+1})$, and therefore $J_k$ can be identified with the variety $\bar{R}_{k}^2\cong\mathbb{A}^{2(k+1)}_K$. For each element $\psi\in J$, denoted the $j^{k}\psi$ the image of $\psi$ by the map induced by the projection $\bar{R}\to \bar{R}_{k}$. We call $\psi$ to be {\em left-right $k$-determined} if it is left-right equivalent to any element in $J$ whose $k$-jet coincides with $j^{k}\psi$. A number $k$ is called {\em left-right sufficiently large} for $\psi$, if there exists a neighbourhood $U$ of the $j^k \psi$ in $J_k$ such that every $\psi'\in J$ with $j^k\psi'\in U$ is left-right $k$-determined. We also consider the ${k}$-jet of the left-right group $\mathcal A$ defined by $\mathcal A_{k}:=\mathcal R_{k}\times \mathcal L_{k}$. This group acts naturally on the ${k}$-jet space $J_{k}$. The {\em left-right modality} of $\psi$, $\mathcal A\text{-}\mathrm{mod}(\psi)$, is defined to be the $\mathcal A_k\text{-}\mathrm{modality}$ of $j^k \psi$ in $J_k$ with $k$ right sufficiently large for $\psi$. 

Let $f\in\mathfrak{m}\subset K[[x,y]]$ be reduced plane curve singularity and let $\psi$ be its parametrization. By Theorem \ref{thm21}, $\psi$ is left-right $(|{\bf c}|+1)$-determined, where $|{\bf c}|$ denotes the sum $c_1+\ldots+c_r$ for ${\bf c}=(c_1,\ldots,c_r)$. Note that, 
$$|{\bf c}|=\sum_{i=1}^r \left(c(f_i)+\sum_{j\neq i} i(f_i,f_j)\right)=2\delta(f).$$
It yields that  $\psi$ is left-right $(2\delta(f)+1)$-determined. From the upper semi-continuity of the delta function $\delta$ (see \cite{CL06}), we can show, by using the same argument as in \cite{GN16}, that $k=2\delta(f)+1$ is left-right sufficiently large for $\psi$. The {\em parametrization modality} of $f$, denoted by $\mathcal P\text{-}\mathrm{mod}(f)$, is defined to be the left-right modality of $\psi$, i.e the number $\mathcal A_k\text{-}\mathrm{mod}(j^k \psi)$.

A plane curve singularity $f\in K[[x,y]]$ is called {\em parametrization} {\em simple, uni-modal, bi-modal} or {\em $r$-modal} if its parametrization modality is equal to 0,1,2 or $r$ respectively. These notions are independent of the choice of a parametrization, and its sufficiently large number $k$. This may be proved in much the same way as \cite[Prop. 2.6, 2.12]{GN16}. The simpleness can be also described by deformation theory. A plane curve singularity $f\in K[[x,y]]$ is parametrization simple if its parametrization is of finite deformation type, i.e. its parametrization can be deformed only into finitely many left-right classes in $J$. 
\subsection{Parametrization simple irreducible plane curve singularities}
\begin{theorem}\label{thm31}
Let $p=\mathrm{char}(K)$. An irreducible plane curve singularity $f\in \mathfrak{m}^2\subset K[[x,y]]$ is parametrization simple if and only if one of its parametrizations is left-right equivalent to one of the singularities in the Tables \ref{table1}, \ref{table2}, \ref{table3} (where $\varepsilon\in \{0,1\}$ and  $c_k(y)=a_0+a_1y+\ldots+a_{k}y^{k}\in K[y]$).
\end{theorem}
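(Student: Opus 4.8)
The plan is to follow the classical strategy of Arnol'd-type classifications, adapted to parametrization (left-right) equivalence and to positive characteristic, splitting the argument into a ``normal form'' direction and a ``modality'' direction. First I would fix an irreducible $f\in\mathfrak m^2$ with parametrization $\psi=(x(t),y(t))$, and normalize using the $\mathcal A$-action: after a coordinate change in the source (the group $\mathcal R$ acting on $t$) and in the target (the group $\mathcal L$ acting on $x,y$) one may assume $x(t)=t^{m}$ where $m=\mathrm{mt}(f)$ is the multiplicity, and $y(t)=\sum_{j>m} a_j t^{j}$ with $a_j=0$ whenever $m\mid j$ (Weierstrass-type normalization; this is where $p$ enters, since one cannot always kill the $t^{km}$ terms by a Tschirnhaus substitution when $p\mid$ the relevant binomial coefficients — I would treat $p\mid m$, $p\nmid m$ as separate cases, and track the characteristic exponents $\bar\beta_1<\bar\beta_2<\cdots$). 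By Theorem \ref{thm21} (or rather its corollary that $k=2\delta+1$ is $\mathcal A$-sufficiently large) it suffices to work in the finite-dimensional jet space $J_k$ with the algebraic group $\mathcal A_k$ acting, so all arguments become finite-dimensional orbit computations.

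Next I would prove the ``if'' direction: every singularity in Tables \ref{table1}, \ref{table2}, \ref{table3} is parametrization simple. For each listed normal form one exhibits a finite-dimensional semi-universal-type family (a transversal to the $\mathcal A_k$-orbit inside $J_k$) and checks directly that it meets only finitely many $\mathcal A_k$-orbits; equivalently, one shows the orbit of the $k$-jet has codimension $0$ in a Rosenlicht stratum, i.e. $\mathcal A_k\text{-}\mathrm{mod}=0$. Concretely this amounts to computing the tangent space to the $\mathcal A$-orbit — spanned by $t\cdot(x'(t),y'(t))\cdot K[[t]]$ (infinitesimal source action) and by $(\partial_x, \partial_y)$ composed with $\psi$, i.e. $(x(t)^a y(t)^b$-combinations) (infinitesimal target action) — and verifying that its complement in $J_k$ is covered by finitely many orbits, which for the listed forms reduces to a short explicit monomial bookkeeping argument, uniform in $p$ except for the finitely many small primes appearing in the tables via the parameter $\varepsilon$ and the polynomial $c_k(y)$.

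The ``only if'' direction — that the list is exhaustive — is the main obstacle, and I would organize it as an adjacency/stratification argument by induction on $\delta$ (equivalently on $|{\bf c}|$). One shows that any $f$ not $\mathcal A$-equivalent to an entry of the tables deforms into a one-parameter family with infinitely many $\mathcal A$-classes; the standard device is to locate, in the jet space, either a modulus directly or a ``bad'' sub-jet (e.g. the normal forms just past the tables, the non-simple ``boundary'' cases, analogous to the $E_{12}, X_9, J_{10}$-type obstructions in the Arnol'd list) and invoke the semicontinuity of modality under specialization. The delicate point is that in characteristic $p$ new moduli can appear — or disappear — precisely at the primes dividing the characteristic exponents or the multiplicity, so the case division must be finer than in \cite{BrG82}; I expect the heart of the proof to be a careful comparison, characteristic by characteristic for the small primes and ``generically'' for large $p$, of which monomials in $y(t)$ can be removed by the $\mathcal A_k$-action, following the method of \cite{GN16} and \cite{Ng17}. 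Combining the two directions, together with the determinacy bound of Theorem \ref{thm21} which guarantees the classification is carried out in a fixed finite jet space, yields the stated equivalence.
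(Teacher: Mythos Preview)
Your outline is broadly correct and shares the paper's bipartite structure, but the implementation differs in one substantive place on each side.

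For the ``only if'' direction the paper does not induct on $\delta$ or hunt for bad sub-jets via adjacency. It instead isolates a short list of numerical obstructions on $(m,n,p)$ (Proposition~\ref{prop32}) and, for each, applies a single group-action dimension lemma (Lemma~\ref{lm31}): one restricts to a subvariety $Y\subset J_k$, projects openly to a low-dimensional jet slice $X'$ on which only a small subgroup $G'\subset\mathcal A$ acts effectively, checks that $\mathcal A_k$-orbits in $Y$ map into $G'$-orbits, and reads off $\mathcal A\text{-}\mathrm{mod}\geq\dim X'-\dim G'\geq 1$. This is more direct than your adjacency/specialization scheme and avoids any inductive bookkeeping.

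For the ``if'' direction the paper bypasses tangent-space and orbit-codimension computations entirely. It first proves (Proposition~\ref{prop35}) that \emph{every} irreducible $f$ with prescribed small values of $(m,n,c(f))$ is already $\mathcal A$-equivalent to one of the listed normal forms, by explicit stepwise elimination of monomials in $y(t)$ up to the determinacy bound of Theorem~\ref{thm21}. Simpleness of the listed forms then follows in one line from the upper semicontinuity of the multiplicity $m$ and the conductor $c$: any nearby parametrization still has bounded $m$ and $c$, hence by Proposition~\ref{prop35} again falls into the finite list. Your proposed tangent-space verification would work in principle, but the semicontinuity-of-$(m,c)$ shortcut is the main efficiency gain you are missing, and it is what makes the positive-characteristic case tractable without a separate infinitesimal calculation for each prime.
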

\begin{proof}
The theorem follows from Propositions \ref{prop32} and \ref{prop35} below. 
\end{proof}

\begin{table}[h]
\begin{tabular}[20pt]{cl l l}
\hline
Name& Equations&Parametrizations&Conditions\\
\hline
$\mathrm{A}_{2k}$&$x^2+y^{2k+1}$&$(t^2,t^{2k+1})$&$k\geq 1$\\

$\mathrm{E}_{6}$&$x^3+y^{4}$& $(t^3,t^{4})$& \\

$\mathrm{E}_{8}$&$x^3+y^{5}$& $(t^3,t^{5})$& $p>5$\\

&$x^3+y^{5}+\varepsilon xy^4$& $(t^3,t^{5}+\varepsilon t^{7})$& $p=5$\\

$\mathrm{E}_{6k}$&$x^3+y^{3k+1}+c_{k-2}(y) x^2y^{2k+1}$& $(t^3,t^{3k+1}+\varepsilon t^{3(k+q)+2})$&$ 0\leq q\leq k-1$\\
&& &$q< k-1$ if $p\nmid 3k+1$\\

$\mathrm{E}_{6k+2}$&$x^3+y^{3k+2}+c_{k-2}(y) x^2y^{2k+1}$& $(t^3,t^{3k+2}+\varepsilon t^{3(k+q)+4})$&$ 0\leq q\leq k-1$\\
&& &$q< k-1$ if $p\nmid 3k+2$\\

$\mathrm{W}_{12}$&$x^4+y^5+a x^2y^3$& $(t^4,t^{5}+\varepsilon t^{q})$&$q=6,7,11$ \\
&& &$q\neq 6$ if $p>5$\\
$\mathrm{W}_{18}$&$x^4+y^7+c_1(y) x^2y^4$& $(t^4,t^{7}+\varepsilon t^{q})$&$p\neq 7;\ q=9,13$\\

$\mathrm{W}^{\sharp}_{2q-1}$&$(x^2+y^3)^2+c_1(y) xy^{q+4}$& $(t^4,t^{6}+ t^{2q+5})$&$\ q\geq 1$\\

\hline
\end{tabular}
\newline

\caption{Irreducible simple plane curve singularities ($p>3$).
}\label{table1}
\end{table}
\begin{table}[h]
\begin{tabular}[20pt]{cl l l}
\hline
Name& Equations&Parametrizations&Conditions\\
\hline
$\mathrm{A}_{2k}$&$x^2+y^{2k+1}$&$(t^2,t^{2k+1})$&$1\leq k$\\

$\mathrm{E}_{6}$&$x^3+y^{4}+\varepsilon x^2y^2$& $(t^3+\varepsilon t^{5},t^{4})$& \\

$\mathrm{E}_{8}$&$x^3+y^{5}$& $(t^3+\varepsilon t^{4}),t^{5}$& \\

$\mathrm{W}_{12}$&$x^4+y^5+a x^2y^3$& $(t^4,t^{5}+\varepsilon t^{q})$&$q=7,11$\\

\hline
\end{tabular}
\newline

\caption{Irreducible simple plane curve singularities in characteristic $3$. 
}\label{table2}
\end{table}

\begin{table}[h]
\begin{tabular}[20pt]{cl l l}
\hline
Name& Equations&Parametrizations&Conditions\\
\hline
$\mathrm{A}_{2k}$&$x^2+y^{2k+1}+\varepsilon xy^{2k-q}$&$(t^{2k+1},t^2+\varepsilon t^{2q+1})$&$1\leq q<k$\\

$\mathrm{E}_{6}$&$x^3+y^{4}+\varepsilon x^2y^2$& $(t^{4}+\varepsilon t^{5},t^3)$& \\

$\mathrm{E}_{8}$&$x^3+y^{5}$& $(t^{5},t^3)$& \\

$\mathrm{E}_{12}$&$x^3+y^{7}+\varepsilon x^2y^{5}$& $(t^3,t^{7}+\varepsilon t^{8})$&\\

\hline
\end{tabular}
\newline

\caption{Irreducible simple plane curve singularities in characteristic $2$. 
}\label{table3}
\end{table}

\begin{proposition}\label{prop32}
Let $f\in \mathfrak{m}^2\subset K[[x,y]]$ be an irreducible plane curve singularity and let $\left(x(t),y(t)\right)$ be its parametrization with $m=\mathrm{ord} x(t)=\mathrm{mt}(f)<\mathrm{ord} y(t)=n$. Then $f$ is not parametrization simple if either
\begin{itemize}
\item[(i)] $m>4$ or $\mathrm{(ii)}$ $m=4$ and $p=2$ or
\item[(iii)] $m=4$ and $n>7$ or $\mathrm{(iv)}$ $m=4$ and $n=7$ and $p=7$, or 
\item[(v)]  $m\geq 3, n\geq 6$ and $p =3$ or $\mathrm{(vi)}$ $m=3$ and $n\geq 8$ and $p=2$.
\end{itemize}
\end{proposition}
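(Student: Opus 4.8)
The plan is to prove non-simplicity, $\mathcal P\text{-}\mathrm{mod}(f)\geq1$, by first reducing each case (through deformation) to one non-simple ``model'' semigroup stratum, and then producing a modulus for each model.

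\emph{Reduction.} The parametrization modality does not increase under specialization: if $f$ deforms to $g$ then $\mathcal P\text{-}\mathrm{mod}(f)\geq\mathcal P\text{-}\mathrm{mod}(g)$; this follows from the description of the modality via the Rosenlicht stratification of $J_k$ recalled in Section \ref{sec3}, the upper semicontinuity of $\delta$ and of $\mathrm{mt}$, and the argument of \cite{GN16}. Since a branch with $\mathrm{ord}\,x(t)=m$, $\mathrm{ord}\,y(t)=n$ and $\gcd(m,n)=1$ has value semigroup exactly $\langle m,n\rangle$, each hypothesis reduces — by adding to $x(t)$ a term that drops the multiplicity, and/or adding $\lambda t^j$ to $y(t)$ to lower $\mathrm{ord}\,y$ — to the statement that a single equisingularity stratum $X_S\subset J_k$ (the locus of $k$-jets of parametrizations with semigroup $S$) has $\mathcal A_k$-orbit space of positive dimension; as the generic locus of $X_S$ is dense in $X_S$, this makes every branch of the type considered have modality $\geq1$. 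The models are: (i) $S=\langle5,6\rangle$; (ii) $\langle4,5\rangle$ with $p=2$; (iii) $\langle4,9\rangle$ (an $m=4$, $n>7$ branch deforms to one with $\mathrm{ord}\,y=9$); (iv) $\langle4,7\rangle$ with $p=7$; (v) $\langle3,7\rangle$ together with the multiplicity-$4$ semigroups $\langle4,7\rangle$, $\langle4,9\rangle$ and $\langle4,6,2q+5\rangle$ ($q\geq1$), all with $p=3$; (vi) $\langle3,8\rangle$ with $p=2$ (the cases $m\geq5$ inside (v) belong to (i)).

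\emph{A modulus for each model.} One would put a parametrization with semigroup $S$ into the normal form $x(t)=t^m+\sum_j b_j t^j$, $y(t)=t^n+\sum_j a_j t^j$ with $j$ running over the gaps of $S$ (all other terms being removable by coordinate changes), and then follow the action on the coefficients $(a_j,b_j)$ of the source reparametrizations and of the rescalings $t\mapsto ct$, $x\mapsto\alpha x$, $y\mapsto\beta y$. For $S=\langle5,6\rangle$ and $S=\langle4,9\rangle$ the required input is Zariski's classical computation: already over $\mathbb C$ such a stratum carries $\geq1$ modulus (witnessed e.g. by the two-parameter family $(t^5,\,t^6+a t^8+b t^9)$, whose invariant is $b^2/a^3$, and by an analogous family for $\langle4,9\rangle$). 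For the characteristic-specific models the point is that the normalizations available over $\mathbb C$ break down: reparametrizing $(t^m,t^n)$ by $\epsilon t^j$ changes the $x$-component only in degree $mj$ when $p\mid m$, and the $y$-component only in degree $nj$ when $p\mid n$ (and, in case (iv), only there when $p=n$), because the intervening binomial coefficients $\binom mk$, $\binom nk$ vanish mod $p$ exactly in those characteristics; consequently two coefficients at gap degrees can be killed by neither a reparametrization nor a coordinate change, and modulo the ($\leq3$-dimensional) rescaling torus a $1$-dimensional family of classes survives — for instance $(t^4+a t^6+b t^7,\,t^5)$ in $p=2$, $(t^3+a t^5+b t^8,\,t^7)$ in $p=3$, $(t^4,\,t^7+a t^9+b t^{13})$ in $p=7$, $(t^3,\,t^8+a t^{10}+b t^{13})$ in $p=2$, and analogous two-parameter families for $\langle4,7\rangle$, $\langle4,9\rangle$ and $\langle4,6,2q+5\rangle$ in $p=3$. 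In each case the induced morphism from an open part of the $(a,b)$-plane to $X_S/\mathcal A_k$ has $1$-dimensional image, giving $\mathcal P\text{-}\mathrm{mod}(f)\geq1$.

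\emph{Main obstacle.} The real work is these positive-characteristic normal-form computations: showing that the two chosen gap-degree coefficients genuinely survive every reparametrization and coordinate change — the subtle point being that the trivializations available over $\mathbb C$ require dividing by $m$, by $n$, or by a $p$-divisible binomial coefficient — and that, after the spill-over of a reparametrization into other gap-degree coefficients is accounted for, at least one true parameter remains once one quotients by the rescaling torus. Carrying this out uniformly over all the triples $(m,n,p)$ in (i)--(vi), in the spirit of \cite{BrG82} but over an arbitrary ground field, is where the difficulty lies; the reductions by deformation are routine by comparison.
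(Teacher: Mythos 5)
Your strategy (specialize to a model semigroup stratum, then exhibit an explicit modulus there) is genuinely different from the paper's, but as written it has a real gap: the step you yourself call ``the real work'' --- verifying that the chosen gap-degree coefficients survive the full left-right action in the relevant characteristic --- is exactly the content of the proposition, and it is nowhere carried out. Moreover, several of the inputs you propose for it are insufficient or incorrect. For the models $\langle 5,6\rangle$ and $\langle 4,9\rangle$ you invoke Zariski's characteristic-zero computation, but cases (i) and (iii) are asserted in every characteristic, and a characteristic-zero invariant such as $b^2/a^3$ does not transfer without redoing the orbit computation over a field of characteristic $p$. For the characteristic-specific models your binomial mechanism is stated too broadly: substituting $t\mapsto t+\epsilon t^j$ into $t^n$ with $p\mid n$ kills only the $k=1$ term, and all $\binom{n}{k}$, $0<k<n$, vanish mod $p$ only when $n$ is a power of $p$; for your own model $\langle 4,6,2q+5\rangle$ with $p=3$ one has $\binom{6}{3}=20\not\equiv 0 \pmod 3$, so the claimed rigidity fails, and for the model $\langle 4,7\rangle$ with $p=3$ (which is part of case (v) and must be non-simple, since $W_{18}$ is absent from Table \ref{table2}) the characteristic divides neither $4$ nor $7$, so your sketch identifies no mechanism at all. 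Finally, the reduction step rests on semicontinuity of parametrization modality under deformation, which in this paper is only asserted (by analogy with \cite{GN16}), not proved, so you are importing an extra unproven ingredient that the actual proof does not need.

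The paper avoids all of this with a single dimension count: Lemma \ref{lm31} is applied directly to a locally closed slice $Y\subset J_k$ containing the given jet (for (iv), $Y=\{\psi : \psi(x)=at^4,\ a\neq 0,\ \mathrm{ord}\,\psi(y)\ge 7\}$), together with the projection to a small jet space $X'\cong(\mathbb{A}\setminus\{0\})\times\mathbb{A}^{4}$ and a small group $G'=\mathcal{R}_1\times\mathcal{L}'_2$ of dimension $4$; the orbit-inclusion hypothesis is checked by a short coefficient computation (where $p=7$ forces $c_2=c_3=0$), and then $\mathcal{A}\text{-}\mathrm{mod}(\psi)\ge\dim X'-\dim G'=1$ for every point of $Y$ at once. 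This requires no explicit invariants, no normal forms, and no adjacency argument, and it is what makes a uniform treatment of all triples $(m,n,p)$ in (i)--(vi) feasible. To complete your proposal you would have to perform, case by case and in each characteristic, precisely the orbit computations you defer --- at which point you would essentially be re-deriving the paper's dimension-count argument by hand.
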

For the proof of these theorems we need the following lemma which is deduced from Corollaries A.4, A.9, A.10 of \cite{GN16} (see \cite[Prop. 3.2.4, Cor. 3.3.4 and Cor. 3.3.6]{Ng13} for more details).
\begin{lemma}\label{lm31}
Let the algebraic groups $G$ resp. $G'$ act on the varieties $X$ resp. $X'$. Let 
$h: Y\to X$ a morphism  of varieties and let $h':Y\to X'$ an open morphism such that 
$$
h^{-1}(G\cdot h(y))\subset h'^{-1}(G'\cdot h' (y)), \forall y\in Y.
$$
Then for all $y\in Y$ we have
$$G\text{-}\mathrm{mod}(h(y))\geq G'\text{-}\mathrm{mod}(h'(y))\geq \dim X'-\dim G'.$$
\end{lemma}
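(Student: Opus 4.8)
The plan is to prove the two inequalities separately, writing $x=h(y)$ and $x'=h'(y)$. For the lower bound $G'\text{-}\mathrm{mod}(h'(y))\geq \dim X'-\dim G'$, I would exploit the top stratum of the Rosenlicht stratification $X'=\bigcup_j X'_j$. When $X'$ is irreducible (the only case arising in the applications, where $X'$ is a jet space $\cong\mathbb{A}^N_K$) there is a unique dense open stratum $X'_0$ with $\dim X'_0=\dim X'$. For every open neighbourhood $U'$ of $x'$ the intersection $U'\cap X'_0$ is open and dense in $X'_0$, hence of dimension $\dim X'$; since the fibres of the geometric quotient $p'_0\colon X'_0\to X'_0/G'$ are the $G'$-orbits, which have dimension at most $\dim G'$, the image $p'_0(U'\cap X'_0)$ has dimension at least $\dim X'-\dim G'$. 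Thus $G'\text{-}\mathrm{mod}(U')\geq \dim X'-\dim G'$ for every $U'$, and taking the infimum over neighbourhoods gives the bound.

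For the inequality $G\text{-}\mathrm{mod}(h(y))\geq G'\text{-}\mathrm{mod}(h'(y))$ I would reduce, via the definition of point modality as an infimum over neighbourhoods, to the following claim: for every open neighbourhood $U$ of $x$ in $X$, the set $U':=h'(h^{-1}(U))$ is an open neighbourhood of $x'$ (open because $h'$ is open) and satisfies
\[
G\text{-}\mathrm{mod}(U)\geq G'\text{-}\mathrm{mod}(U').
\]
Granting this, $G\text{-}\mathrm{mod}(U)\geq G'\text{-}\mathrm{mod}(U')\geq G'\text{-}\mathrm{mod}(x')$, and minimising over $U$ yields the desired inequality.

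To prove the claim I would first choose a stratum $X'_j$ realising $d':=G'\text{-}\mathrm{mod}(U')=\dim p'_j(U'\cap X'_j)$, and set $W:=h^{-1}(U)\cap h'^{-1}(X'_j)$, a locally closed subset of $Y$. A short point-set check shows $h'(W)=U'\cap X'_j$ and that the restriction $h'\colon h'^{-1}(X'_j)\to X'_j$ is again open, so $h'(W)$ is open in $X'_j$ and the morphism $q\colon W\to X'_j/G'$, $w\mapsto p'_j(h'(w))$, has image of dimension exactly $d'$. Decomposing $W=\bigcup_i W_i$ along the $G$-strata, $W_i:=W\cap h^{-1}(X_i)$, the hypothesis $h^{-1}(G\cdot h(w))\subset h'^{-1}(G'\cdot h'(w))$ forces $q$ to be constant on the fibres of $p_i\circ h|_{W_i}\colon W_i\to X_i/G$: two points with the same image in $X_i/G$ lie in one $G$-orbit through $h$, hence in one $G'$-orbit through $h'$, hence have the same image under $q$. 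Therefore $q(W_i)$ is the image of $p_i(h(W_i))$ under an induced map, so
\[
\dim q(W_i)\leq \dim p_i(h(W_i))\leq \dim p_i(U\cap X_i)\leq G\text{-}\mathrm{mod}(U).
\]
Since $q(W)=\bigcup_i q(W_i)$ is a finite union, $d'=\dim q(W)=\max_i\dim q(W_i)\leq G\text{-}\mathrm{mod}(U)$, which is the claim.

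The hard part will be the bookkeeping across the two stratifications in this last step. Concretely, I must verify that $h'$ restricts to an \emph{open} map onto each stratum $X'_j$, so that $q$ really attains the full image dimension $d'$, and that the set-theoretic factorisation of $q$ through the $G$-quotient genuinely produces the inequality $\dim q(W_i)\leq \dim p_i(U\cap X_i)$; the cleanest way is to pass to the graph of the induced map $p_i(h(W_i))\to X'_j/G'$ and invoke that a bijective constructible morphism preserves dimension. A secondary subtlety is the irreducibility hypothesis needed for the lower bound $\geq\dim X'-\dim G'$: in full generality this only holds at points lying in the closure of a component of dimension $\dim X'$, but since the applications take $X'$ to be a jet space, hence an affine space, this case is covered.
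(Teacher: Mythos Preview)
The paper does not actually supply a proof of this lemma: it simply records that the statement is ``deduced from Corollaries A.4, A.9, A.10 of \cite{GN16}'' (with further detail in \cite{Ng13}), so there is no argument in the text to compare yours against. Your proposal is a direct, self-contained proof along the lines that underlie those cited results, and it is sound.

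The two technical points you flag are real but both go through. Openness of the restriction $h'|_{h'^{-1}(X'_j)}$ follows because for any open $V\subset Y$ one has $h'(V\cap h'^{-1}(X'_j))=h'(V)\cap X'_j$, which is open in $X'_j$ since $h'$ is open; applying this with $V=h^{-1}(U)$ gives that $h'(W)=U'\cap X'_j$ is open in $X'_j$, so $q(W)$ has the required dimension $d'$. The inequality $\dim q(W_i)\le \dim p_i(h(W_i))$ is obtained exactly as you indicate, by taking the image of $(p_i\circ h,\,q)\colon W_i\to X_i/G\times X'_j/G'$ and observing that the first projection of this image is a constructible bijection onto $p_i(h(W_i))$, hence dimension-preserving, while the second projection surjects onto $q(W_i)$.

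Your caveat on the bound $G'\text{-}\mathrm{mod}(x')\ge\dim X'-\dim G'$ is also appropriate: it genuinely requires $x'$ to lie in the closure of a component of $X'$ of maximal dimension. One small correction: in the paper's application $X'\cong(\mathbb{A}\setminus\{0\})\times\mathbb{A}^4$, not a full affine space as you wrote, but it is still irreducible, so your argument covers it.
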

\begin{proof}[Proof of Proposition \ref{prop32}] We give a proof for (iv), since the others are similar and simpler. Let $k$ be left-right sufficiently large for the paramterization $(x(t),y(t))$ of $f$. Let $X:=J_k$, $G:=\mathcal{A}_k$.
We denote $Y:=\{\psi\in J_{k}\mid \psi (x)= at^4,a\neq 0,\mathrm{ord}\ \psi (y)\geq 7\}$,
$$X':=\{\psi\in J_{10}\mid \psi (x)= at^4,a\neq 0,\mathrm{ord}\ \psi (y)\geq 7\}\cong (\mathbb{A}\setminus \{0\})\times\mathbb{A}^{4},$$
 $G':=\mathcal{R}_1\times\mathcal{L}'_2$ with $\dim G'=4$ and define $h\colon Y\to X$ and $h'\colon Y\to X'$ to be the natural inclusion and projection respectively, where 
$$\mathcal{L}'_2:=\left\{\left(a_{10}x+\mathfrak{m}(3),b_{01}y+b_{20}x^2+\mathfrak{m}(3)\right)\in \mathcal{L}_2\right\}.$$
Here and below, for each $k\geq 1$, $\mathfrak{m}(k)$ resp. $O(k)$ stands for a series of multiplicity resp. of order at least $k$. We are going to show that 
$$
h^{-1}(G\cdot h(y))\subset h'^{-1}(G'\cdot h' (y)), \forall y\in Y.
$$
Indeed, for a given $\psi=(at^4,b_1t^7+b_2t^{8}+b_3t^{9}+b_4t^{10}+O(11))\in Y$, assume that $\psi'=(a't^4,b'_1t^7+b'_2t^{8}+b'_3t^{9}+b'_4t^{10}+O(11))\in Y$ such that $h(\psi')\in G.h(\psi)$. That is, there exist $\phi=c_1t+c_2t^2+\ldots \in \mathcal{R}_k$,
$$\Phi=(a_{10}x+a_{01}y+\mathfrak{m}(2), b_{10}x+b_{01}y+\mathfrak{m}(2))\in \mathcal{L}_k$$
such that $\psi'=\Phi\circ \psi\circ \phi$. This implies that $c_2=c_3=0$ and therefore
$$a'_1=a_{10}c^4_1a_1, b'_i=b_{01}c^7_1 b_1, b'_2=b_{01}c^{10}_1 b_2+b_{20}a^2_1, b'_3=b_{01}c^{9}_1 b_3, b'_4=b_{01}c^{10}_1 b_4.$$
Putting $\phi':=c_1t\in \mathcal{R}_1$ and $\Phi=(a_{10}x, b_{01}y+b_{20} x^2)\in \mathcal{L}'_2$
we get that $\psi'=\Phi'\circ \psi\circ \phi'$, i.e. $h'(\psi')\in G'.h'(\psi)$. It follows from Lemma \ref{lm31} that
 $$\mathcal{A}\text{-}\mathrm{mod}(\psi)\geq \dim X'-\dim G'=1.$$

\end{proof}
\begin{proposition}\label{prop35}
Let $f\in \mathfrak{m}^2\subset K[[x,y]]$ be an irreducible plane curve singularity and let $\left(x(t),y(t)\right)$ be its parametrization with $m=\mathrm{ord} x(t)<\mathrm{ord} y(t)=n$. 
\begin{itemize}
\item[(i)] If $m=2$ and $c(f)=2k$ then $f$ is parametrization equivalent to a singularity of type $A_k$.
\item[(ii)] If $m=3$ and $c(f)=q$ ($q=6,8$ if $p=3$; $q=6,8,12$ if $p=2$), then $f$ is parametrization equivalent to a singularity of type $E_q$.
\item[(iii)] If $m=4, n\neq 6$ and $c(f)=q$, then $f$ is parametrization equivalent to a singularity of type $W_q$.
\item[(iv)] If $m=4, n= 6$ and $c(f)=2q+14$, then $f$ is parametrization equivalent to a singularity of type $W^{\sharp}_{2q-1}$.
\end{itemize}
\end{proposition}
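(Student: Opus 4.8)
The plan is to treat each case by first putting the parametrization into a convenient pre-normal form via the $\mathcal A$-action, and then showing that the remaining moduli are killed. In every case one starts from $\psi=(x(t),y(t))$ with $m=\mathrm{ord}\,x(t)<n=\mathrm{ord}\,y(t)$. Using a reparametrization $t\mapsto ct+O(t^2)$ in $\mathcal R$ together with a linear change in $\mathcal L$ one first normalizes the leading terms so that $x(t)=t^m$ and $y(t)=t^n+(\text{higher order})$; here the relation $c(f)=2\delta-(\text{genus-type correction})$ for a branch, equivalently the semigroup data, forces $n$ and the conductor $c(f)$ to determine which case we are in. Throughout, Theorem \ref{thm21} guarantees that it suffices to work modulo $t^{c(f)+2}$, so all the normal-form manipulations are finite.

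For case (i), $m=2$: after normalizing $x(t)=t^2$, the coordinate change $y\mapsto y-p(x)$ removes all even-power terms from $y(t)$, so $y(t)=t^{2j+1}(\text{unit})$; rescaling $t$ and $y$ gives $y(t)=t^{2j+1}$, and since $c(f)=i$-data for a branch of multiplicity $2$ equals $2j$ (this is exactly $c=\bar\beta_1-1$ from Lemma \ref{lm22}(i) with $\bar\beta_1=2j+1$), we get $k=j$ and the $A_k$ normal form $(t^2,t^{2k+1})$. For cases (ii)--(iv) with $m\in\{3,4\}$: after normalizing the two leading exponents, one uses the $\mathcal L$-action (changes of the form $x\mapsto x+(\text{stuff in }y)$, $y\mapsto uy+(\text{stuff})$) and the $\mathcal R$-action (reparametrizations $t\mapsto t+\sum_{i\geq 2}c_it^i$) to eliminate successive coefficients of $y(t)$; the remaining terms that cannot be removed by these group actions are precisely the ones appearing in the Tables (the $\varepsilon t^{q}$ tails and the $c_k(y)$ coefficients), and one checks the listed constraints on $q$ (and the exceptional behaviour when $p\mid n$) by tracking exactly which reparametrization coefficients $c_i$ act invertibly on which coefficient of $y(t)$. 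Relating the surviving exponent $q$ back to $c(f)$ via the formula $|{\bf c}|=2\delta$ (here $r=1$ so $c(f)=2\delta$) pins down the name of the singularity ($E_q$, $W_q$, or $W^\sharp_{2q-1}$), and for case (iv) the specific shape $(t^4,t^6+\cdots)$ forces the semigroup $\langle 4,6,2q+13\rangle$, giving $c(f)=2q+14$.

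The main obstacle will be case (iv), $m=4$, $n=6$: here $x(t)$ and $y(t)$ share the common factor $t^2$ in their exponents, so the branch has a nontrivial first characteristic exponent jump and the semigroup is not generated by $\{4,6\}$ alone; the reparametrization group acts much less transparently because many changes $t\mapsto t+c_it^i$ interact with both $x(t)$ and $y(t)$ simultaneously, and one must carefully show that after exhausting the group action the only surviving modulus is the single coefficient of $t^{2q+5}$ in $y(t)$ (which is then scaled to $1$, not to $\varepsilon\in\{0,1\}$, because the $\varepsilon=0$ case would not have $n=6$). A secondary subtlety, already visible in the Tables, is the characteristic-$p$ bookkeeping: when $p\mid n$ a reparametrization that would ordinarily remove a term $t^{n+j}$ fails (its Jacobian vanishes), so an extra modulus $\varepsilon$ must be carried; verifying the exact list of such exceptional $(p,q)$ pairs is the most computation-heavy part of the argument, but it is entirely mechanical once the general reparametrization action on the coefficient of $t^{n+j}$ in $y(t)$ is written out. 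I would organize the write-up so that this reparametrization-action computation is done once in a preliminary lemma and then invoked in each case.
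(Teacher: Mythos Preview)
Your overall strategy matches the paper's: normalize the leading terms, use the $\mathcal A$-action step by step to kill higher coefficients, invoke Theorem~\ref{thm21} to truncate, and identify the normal form via the conductor/semigroup. The paper in fact only writes out one representative case ($W^{\sharp}_1$) explicitly, by exhibiting a concrete sequence of five pairs $(\phi_i,\Phi_i)\in\mathcal R\times\mathcal L$, and declares the remaining cases ``completely similar''; your outline is a reasonable abstraction of exactly that procedure.

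There is, however, one concrete gap. Your opening move ``normalize $x(t)=t^m$'' and your later diagnosis of the characteristic obstruction as ``when $p\mid n$'' are both incorrect when $p\mid m$. In case~(i) with $p=2$ you cannot in general achieve $x(t)=t^2$ by reparametrization: if $x(t)=t^2+t^3$, then for any $\phi(t)=t+at^2+\cdots$ one has $\phi(t)^2=t^2+a^2t^4+\cdots$ in characteristic $2$, so the $t^3$ term of $x(\phi(t))$ survives. Consequently your conclusion ``rescaling gives $y(t)=t^{2j+1}$'' is false in characteristic $2$; Table~\ref{table3} shows the $A_{2k}$ family there carries a genuine modulus $\varepsilon t^{2q+1}$ on the multiplicity-$2$ side, and one instead normalizes the \emph{odd}-order component. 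The same phenomenon occurs in case~(ii) when $p=3$ (compare the $E_6,E_8$ entries in Table~\ref{table2}, where the $\varepsilon$-term sits in the $t^3$-component). So your ``preliminary lemma'' on the reparametrization action must treat the cases $p\mid m$ and $p\mid n$ separately, and in the former you should normalize the component of order coprime to $p$ and track the surviving moduli on the other side.
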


\begin{proof}
We prove only for the case $W^{\sharp}_{1}$, the other cases are proved completely similar. Assume that $m=4,n=6$ and $c(f)=16$. Since $m=4$ is not divisible by $p$, we may assume that $x(t)=t^4$. Let $k$ be the smallest odd exponent with non zero coefficient in $y(t)$. By \cite[Prop. 2.3.9]{Ng13}, $$16=c(f)=2\delta(f)=(6-1)(4-2)+(k-1)(2-1),$$
and hence  $k=7$. It is easy to see that  $\left(x(t),y(t)\right)$ is left-right equivalent to $\left(x_0(t),y_0(t)\right)$ of form
$$\left(t^4,t^6+t^{7}+O(8)\right).$$
We shall show that, $\left(x_0(t),y_0(t)\right)$ is left-right equivalent to a singularity of type $W^{\sharp}_{1}$. By Theorem \ref{thm21}, it suffices to prove that there exist $\phi\in \mathcal{R}$ and $\Phi\in \mathcal{L}$ such that
$$\Phi\left(x_0(\phi(t)),y_0(\phi(t))\right)= \left(t^4+0(16),t^6+t^{7}+0(16)\right).$$ 
We construct a sequence of equivalent elements $\left(x_i(t),y_i(t)\right), i=1,\ldots,5$ by constructing discrete automorphisms $\phi_i\in \mathcal{R}$ and automorphisms $\Phi_i\in \mathcal{L}, i=1,\ldots,5$ such that 
$$\left(x_{i}(t),y_{i}(t)\right)=\Phi_i\left(x_{i-1}(\phi_i(t)),y_{i-1}(\phi_i(t))\right), \forall i=1,\ldots,5$$
and 
$$\left(x_5(t),y_5(t)\right)= \left(t^4+0(16),t^6+t^{7}+0(16)\right).$$
Indeed, we may write
$$\left(x_0(t),y_0(t)\right)=\left(t^4,t^6+t^{7}+b_{8}t^8+b_{9}t^9+0(10)\right),$$
for some $b_{8},b_{9}\in K$, and define
$$\phi_1(t)=t+ct^3+ct^{4},\ \Phi_1(x,y)=\left(x-4cy,y\right)$$
with $c=(b_9-b_8)/7$. Then 
$$\left(x_1(t),y_1(t)\right)=\left(t^4+O(8),t^6+t^{7}+O(10)\right).$$
Since $4$ is not divisible by $p$, there exists an automorphism $\phi_2\in \mathcal{R}$ such that\\ $x_1(\phi_2(t))=t^4$. Putting $\Phi_2(x,y)=\left(x,y\right)$ one has
$$\left(x_2(t),y_2(t)\right)=\left(t^4,t^6+t^{7}+b_{10}t^{10}+b_{11}t^{11}+O(12)\right),$$
for some $b_{10},b_{11}\in K$. We define
$$\phi_3(t)=t+(b_{10}-b_{11})t^5,\ \Phi_3(x,y)=\left(x,y-(7b_{10}-6b_{11})xy\right).$$
Then
$$\left(x_3(t),y_3(t)\right)=\left(t^4+a_{8}t^8+a_{12}t^{12}+0(16),\ t^6+t^{7}+b_{12}t^{12}+b_{13}t^{13}+O(14)\right),$$
for some $a_{8},a_{12}, b_{12},b_{13}\in K$. The automorphism $\phi_4(t)=t$ and the automorphism $$\Phi_4(x,y)=\left(x-a_8x^2+(2a_8-a_{12})x^3,y-\frac{b_{13}}{2} y^2+(\frac{b_{13}}{2}-b_{12})x^3\right)$$ yield that
$$\left(x_4(t),y_4(t)\right)=\left(t^4+0(16),\ t^6+t^{7}+b_{14}t^{14}+b_{15}t^{15}+O(16)\right).$$
Applying  
$$\phi_5(t)=t+(b_{14}-b_{15})t^9,\ \Phi_5(x,y)=\left(x-4(b_{14}-b_{15})x^3,y-(7b_{14}-6b_{15})x^2y\right)$$
to $\left(x_4(t),y_4(t)\right)$ we obtain
$$\left(x_5(t),y_5(t)\right)=\left(t^4+0(16),\ t^6+t^{7}+O(16)\right)$$
as desired.
\end{proof}

\begin{proposition}
The singularities in Tables \ref{table3} are parametrization simple.
\end{proposition}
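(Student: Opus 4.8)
The plan is to prove that each singularity $f_0$ listed in Table \ref{table3}, with a chosen parametrization $\psi_0$, has parametrization modality $\mathcal{P}\text{-}\mathrm{mod}(f_0)=0$. Set $k:=2\delta(f_0)+1$; as recalled just before Theorem \ref{thm31}, this $k$ is left-right sufficiently large for $\psi_0$, so $\mathcal{P}\text{-}\mathrm{mod}(f_0)=\mathcal{A}_k\text{-}\mathrm{mod}(j^k\psi_0)$. It therefore suffices to find an open neighbourhood $U$ of $j^k\psi_0$ in $J_k$ that meets only finitely many $\mathcal{A}_k$-orbits.

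For $U$ small and $\psi\in J$ with $j^k\psi\in U$, I would first analyse the curve that $\psi$ parametrizes. The $t$-order of a power series never exceeds the order of any of its terms, and the leading terms of $\psi_0(x)$ and of $\psi_0(y)$ survive under a sufficiently small perturbation; hence $\mathrm{ord}\,\psi(x)\le\mathrm{ord}\,\psi_0(x)$ and $\mathrm{ord}\,\psi(y)\le\mathrm{ord}\,\psi_0(y)$. Consequently $\psi$ parametrizes an irreducible plane curve singularity $f$ (possibly regular) with $\mathrm{mt}(f)\le\mathrm{mt}(f_0)\le 3$, and, when $\mathrm{mt}(f)=3$, the orders of the two coordinates of $\psi$ are at most $7$ (the only entries of Table \ref{table3} with $\mathrm{mt}=3$ are $\mathrm{E}_6,\mathrm{E}_8,\mathrm{E}_{12}$, whose coordinate orders are $\le 7$). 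By upper semi-continuity of the delta invariant (see \cite{CL06}) we also have $c(f)=2\delta(f)\le 2\delta(f_0)$. One then checks that these bounds on $(\mathrm{mt}(f),\,\mathrm{ord}\,\psi(y),\,c(f))$ confine $f$ to the situations covered by Proposition \ref{prop35}(i)--(ii) and, in particular, keep it out of every non-simple range of Proposition \ref{prop32}.

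By Proposition \ref{prop35}: if $\mathrm{mt}(f)=1$ then $f$ is regular; if $\mathrm{mt}(f)=2$ then by part (i) $f$ is parametrization equivalent to a singularity of type $\mathrm{A}$ with $\delta(f)\le\delta(f_0)$; and if $\mathrm{mt}(f)=3$ then by part (ii), together with $c(f)\le 2\delta(f_0)$, $f$ is parametrization equivalent to one of $\mathrm{E}_6,\mathrm{E}_8,\mathrm{E}_{12}$. In each of these cases $f$ falls into one of only finitely many parametrization-equivalence classes, since in a given type the parameter $\varepsilon\in\{0,1\}$ and the bounded index leave only finitely many normal forms. As $k$ exceeds the determinacy bound of Theorem \ref{thm21} for every such class, each class meets $J_k$ in a single $\mathcal{A}_k$-orbit; hence $U$ meets only finitely many $\mathcal{A}_k$-orbits, so $\mathcal{A}_k\text{-}\mathrm{mod}(j^k\psi_0)=0$ and $f_0$ is parametrization simple. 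The entries of Tables \ref{table1} and \ref{table2} are handled identically, now also invoking parts (iii) and (iv) of Proposition \ref{prop35}.

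The point I expect to require the most care is the uniform control of the $t$-orders of $\psi(x)$ and $\psi(y)$ throughout a neighbourhood of $j^k\psi_0$: one must verify that no small perturbation of $j^k\psi_0$ can raise these orders, for otherwise $f$ could leave the range of Proposition \ref{prop35} or, worse, enter a non-simple range of Proposition \ref{prop32}. Once this semicontinuity-type statement is secured, the finiteness of orbits is an immediate consequence of Proposition \ref{prop35}.
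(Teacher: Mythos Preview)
Your approach is essentially the paper's own: invoke upper semicontinuity of the multiplicity (via the coordinate $t$-orders) and of the conductor (via $\delta$), then apply Proposition~\ref{prop35} to conclude that only finitely many normal forms can occur in a neighbourhood. Your final worry is unfounded: on the Zariski-open locus in $J_k$ where the leading coefficients of $\psi_0(x)$ and $\psi_0(y)$ remain nonzero the $t$-orders can only drop, which is precisely the semicontinuity you need.
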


\begin{proof}
It follows directly from Proposition \ref{prop35} and the upper semicontinuity of the multiplicity $m$ and of the conductor $c$. For instance, a singularity of type $W^{\sharp}_{2q-1}$ ($p>3$) can be deformed into at most the classes $A_{k}, E_{k}, W_{12}, W_{18}, W^{\sharp}_{2q-1}$ with $k\leq 2q+14$.
\end{proof}

\end{document}